\newtheorem{theorem}{Theorem}
\newtheorem{lemma}[theorem]{Lemma}
\newtheorem{remark}[theorem]{Remark}
\newtheorem{proposition}[theorem]{Proposition}
\newcommand{\id}{\operatorname{I}}
\newcommand{\EE}{\mathbb{E}}
\title{Stochastic proof of the  sharp  symmetrized Talagrand inequality }
\author{Thomas A.~Courtade\footnote{Department of EECS, University of California, Berkeley. Email: courtade@berkeley.edu},   Max Fathi\footnote{Université Paris Cité and Sorbonne Université, CNRS, LJLL and LPSM;  and
  DMA, École Normale Supérieure; and
 Institut Universitaire de France. Email:
mfathi@lpsm.paris}, and Dan Mikulincer\footnote{Department of Mathematics, MIT. Email:  danmiku@mit.edu} }%
\date{\today}
\begin{document}

\maketitle

\begin{abstract}
We give a new proof of the sharp symmetrized form of Talagrand's transport-entropy inequality.  Compared to stochastic proofs of other Gaussian functional inequalities, the new idea here is a certain  coupling induced by time-reversed martingale representations. 
\end{abstract}

{\small
\noindent{\bf Keywords:} Transport inequalities; Gaussian inequalities; Blaschke--Santal\'o inequality; Martingale representations.
}

\section{Introduction}

The goal of this note is to give a  short  stochastic proof of the following symmetrized Talagrand inequality:
\begin{theorem}[{\cite[Th.~1.1]{MF}}]\label{thm:symmetrizedTalagrand}
For  probability measures $\mu,\nu$ on $\mathbb{R}^n$ with finite second moments and $\mu$ centered,  
\begin{align} 
W_2(\mu,\nu)^2 \leq 2 D(\mu\|\gamma)  + 2 D(\nu\|\gamma), \label{eq:symmTal}
\end{align}
where $W_2$ is   2-Wasserstein distance, $D$ is  relative entropy, and $\gamma$ is the standard Gaussian measure on $\mathbb{R}^n$. 
\end{theorem}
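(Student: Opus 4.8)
The plan is to run Föllmer's entropy-minimizing drift process simultaneously for $\mu$ and $\nu$, but with the two processes driven by the \emph{same} Brownian endpoint read off in opposite time directions. Fix a standard Brownian motion $(B_t)_{t\in[0,1]}$ with forward filtration $(\mathcal F_t)$, and let $\hat B_t = B_1 - B_{1-t}$ be its time reversal, which is again a standard Brownian motion, with filtration $(\hat{\mathcal F}_t)$, and which crucially satisfies $\hat B_1 = B_1$. Writing $f = d\mu/d\gamma$ and $g = d\nu/d\gamma$, I build the forward Föllmer process $dX_t = u_t\,dt + dB_t$ with $X_0 = 0$ and $u_t = \nabla\log Q_{1-t}f(X_t)$ (where $Q_s$ is the heat semigroup), so that $X_1\sim\mu$ and $D(\mu\|\gamma) = \tfrac12\EE\int_0^1|u_t|^2\,dt$; and symmetrically the reversed Föllmer process $dY_s = \hat v_s\,ds + d\hat B_s$ with $Y_0=0$ and $\hat v_s = \nabla\log Q_{1-s}g(Y_s)$, so that $Y_1\sim\nu$ and $D(\nu\|\gamma) = \tfrac12\EE\int_0^1|\hat v_s|^2\,ds$. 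Setting $U = \int_0^1 u_t\,dt$ and $V = \int_0^1\hat v_s\,ds$, the shared endpoint gives $X_1 = B_1 + U$ and $Y_1 = \hat B_1 + V = B_1 + V$, so this coupling satisfies $X_1 - Y_1 = U - V$.

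Since $(X_1,Y_1)$ is a coupling of $(\mu,\nu)$,
\begin{align}
W_2(\mu,\nu)^2 \le \EE|X_1 - Y_1|^2 = \EE|U|^2 + \EE|V|^2 - 2\,\EE\langle U,V\rangle. \nonumber
\end{align}
By Cauchy--Schwarz in time, $\EE|U|^2 \le \EE\int_0^1|u_t|^2\,dt = 2D(\mu\|\gamma)$ and likewise $\EE|V|^2\le 2D(\nu\|\gamma)$, so the theorem follows once I establish that the cross term satisfies $\EE\langle U,V\rangle \ge 0$.

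This cross term is the heart of the matter, and it is exactly where the time-reversed representation and the centering hypothesis enter. A direct Itô computation shows that each Föllmer drift is a martingale in its own filtration: $du_t = \nabla w(t,X_t)\,dB_t$ with $w=\nabla\log Q_{1-t}f$, whose integrand is the symmetric Hessian $\nabla^2\log Q_{1-t}f$ — this is the gradient (curl-free) structure of the drift. Moreover, since $\mu$ is centered one computes $u_0 = \nabla Q_1 f(0) = \int x\,d\mu = 0$, so $(u_t)$ is a \emph{mean-zero} $(\mathcal F_t)$-martingale $M$, whereas $(\hat v_s)$ is an $(\hat{\mathcal F}_s)$-martingale started at the mean $m$ of $\nu$, say $\hat v_s = m + \hat M_s$. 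Using the stochastic Fubini theorem I would rewrite $U = \int_0^1 (1-r)\,dM_r$ as a forward Itô integral and $V = m + \int_0^1(1-s)\,d\hat M_s$ as a backward Itô integral, and then evaluate $\EE\langle U,V\rangle$ by exploiting that the forward integrand sees only increments of $B$ on early intervals while the backward integrand sees only increments on late intervals: the $m$ contribution drops out because $\EE U = 0$, on the non-overlapping region the mean-zero property forced by centering annihilates the contribution, and on the overlapping region the Markovian/gradient structure of the genuine Föllmer drifts must be used to pin down the sign.

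I expect this last point --- controlling the cross term on the region where the forward and backward filtrations genuinely interact --- to be the main obstacle. A plain forward--forward coupling fails precisely because there the analogous quantity $\EE\langle U,V\rangle$ carries no definite sign; the whole purpose of reading $\nu$'s process off the time-reversed Brownian motion is to convert this into an interaction between a forward and a backward Itô integral whose sign is governed by the centering of $\mu$. The technical crux will be making this forward/backward Itô (equivalently Malliavin/Skorokhod) computation rigorous and confirming that the residual interaction term is genuinely nonnegative rather than merely of controlled magnitude.
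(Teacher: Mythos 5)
Your coupling is the right one --- it is essentially the same coupling the paper uses: the endpoint of the Föllmer/martingale representation of $\mu$ in the forward filtration, paired with the endpoint of the representation of $\nu$ driven by the time-reversed Brownian motion, glued along $\hat B_1=B_1$. But the way you propose to estimate $\EE|X_1-Y_1|^2$ cannot be completed, because your key claim $\EE\langle U,V\rangle\ge 0$ is false. The step $\EE|U|^2\le\EE\int_0^1|u_t|^2\,dt=2D(\mu\|\gamma)$ is Cauchy--Schwarz applied to $U=\int_0^1u_t\,dt$ and is \emph{strictly} lossy unless $t\mapsto u_t$ is a.s.\ constant, so your decomposition discards entropy that must then be recovered from a strictly \emph{negative} cross term. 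Concretely, take $n=1$, $\mu=N(0,\alpha)$, $\nu=N(0,\alpha^{-1})$ with $\alpha\neq1$, an equality case of \eqref{eq:symmTal} with both sides equal to $\alpha+\alpha^{-1}-2$. The Föllmer endpoint is $X_1=\int_0^1\frac{\alpha}{1-t+\alpha t}\,dB_t$, whence $\EE|U|^2=\EE|X_1-B_1|^2=\alpha+1-\frac{2\alpha\log\alpha}{\alpha-1}$ and, symmetrically, $\EE|V|^2=\alpha^{-1}+1-\frac{2\log\alpha}{\alpha-1}$. Since the logarithmic mean of $1$ and $\alpha$ lies below their arithmetic mean, $\frac{(\alpha+1)\log\alpha}{\alpha-1}>2$, hence $\EE|U|^2+\EE|V|^2<\alpha+\alpha^{-1}-2=W_2(\mu,\nu)^2\le\EE|X_1-Y_1|^2$, which forces $\EE\langle U,V\rangle<0$. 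So the sign you hope to extract from the forward/backward Itô computation fails precisely in the nontrivial equality cases, and no refinement of that computation can rescue the chain $W_2^2\le\EE|U|^2+\EE|V|^2-2\EE\langle U,V\rangle$ followed by termwise bounds.

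The paper avoids this by never splitting the squared distance additively into the two drift energies. It writes $X_1-Y_1=\int_0^1(G_t-H_t)\,d\hat B_t$ with \emph{both} integrands adapted to the reversed filtration, applies It\^o's isometry, and uses the weighted convexity bound $\|G_t-H_t\|^2\le\frac{1}{t}\|G_t-\id\|^2+\frac{1}{1-t}\|H_t-\id\|^2$, whose weights $1/t$ and $1/(1-t)$ sum correctly against the entropy functional. The price is that $\mu$'s energy then appears with the ``wrong'' weight $1/t$; the crucial new ingredient (Lemma \ref{lem:integralInequality}) is a tail-energy comparison between the forward representation $F$ and the reversed representation $G$ of the same random variable --- proved via the Pythagorean identity for conditional expectations together with a Clark--Ocone/Malliavin time-reversal identity $\EE[F_t]=\EE[G_{1-t}]$ --- which converts $\int_0^1 t^{-1}\EE\|G_t-\id\|^2\,dt$ into $\int_0^1(1-t)^{-1}\EE\|F_t-\id\|^2\,dt=2D(\mu\|\gamma)$. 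That weight-transfer lemma, not a sign condition on the cross term, is the missing idea, and it is where the centering of $\mu$ actually enters.
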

By duality, \eqref{eq:symmTal} is  formally equivalent to the functional Blaschke--Santal\'o inequality \cite[Theorem 1.2]{LBS}, which states that if Borel functions $f,g : \mathbb{R}^n \to \mathbb{R}$ satisfy 
\begin{align}
\int_{\mathbb{R}^n} xe^{-f(x)}dx = 0 ~~~\mbox{and}~~~f(x) + g(y) \geq \langle x, y\rangle, ~~\forall x,y\in \mathbb{R}^n,  \notag
\end{align}
then 
\begin{align} 
\left(\int_{\mathbb{R}^n} {e^{-f(x)}}dx\right)\left(\int_{\mathbb{R}^n} {e^{-g(x)}dx}\right) \leq (2\pi)^n. \label{functional_santalo}
\end{align}
Equality holds for quadratic $f$, and $g = f^*$, its Legendre dual.   Despite the  equivalence, \eqref{eq:symmTal} may be regarded as a formal strengthening of \eqref{functional_santalo} in the sense that \eqref{functional_santalo} is recovered from Theorem \ref{thm:symmetrizedTalagrand} by weak duality: briefly,  for $f,g$ satisfying the hypotheses demanded by \eqref{functional_santalo},   take $d\mu(x) \propto e^{-f(x)} dx$ and  $d\nu(x) \propto e^{-g(x)} dx$  in \eqref{eq:symmTal} and simplify to obtain \eqref{functional_santalo}.  The reverse implication corresponds to strong duality, and is more difficult. See \cite{MF}.

Inequality  \eqref{functional_santalo} is a functional generalization of the earlier Blaschke--Santal\'o inequality for the volume product of convex sets. It was proved in \cite{AKM} (and earlier in K. Ball's PhD thesis \cite{Ball} in a restricted setting of even functions). The original proof relied on the usual Blaschke--Santal\'o inequality applied to level sets. Lehec later gave two alternative proofs; one using induction on the dimension \cite{LBS}, and the other \cite{Leh} using the Prek\'opa--Leindler inequality and the Yao--Yao partition theorem. This last proof actually yields a more general statement, originally due to \cite{FM}, but the present work shall be restricted to the classical setting. More recently, a new semigroup proof of the inequality for even functions was established in \cite{NT} using improved hypercontractive estimates for the heat flow, and then simplified in \cite{CGNT}. Let us also mention a recent generalization to several functions under a symmetry assumption, due to Kolesnikov and Werner \cite{KW}.

Equivalence between integral inequalities of the form \eqref{functional_santalo} and transport inequalities of the form \eqref{eq:symmTal} via duality goes back to \cite{BG}, where they studied Talagrand quadratic transport-entropy inequality \cite{Tal} (which is \eqref{eq:symmTal} in the particular case $\mu = \gamma$). Duality for transport inequalities involving three measures, such as \eqref{eq:symmTal}, was first considered in \cite[Proposition 8.2]{GL}.

 Stochastic proofs of functional inequalities, in particular using Brownian motion and Girsanov's theorem, go back to Borell's stochastic proof of the Prek\'opa--Leindler inequality \cite{Bor}. Our present work is motivated by Lehec's short stochastic proofs of various functional inequalities \cite{L}, including in particular Talagrand's transport-entropy inequality.

\smallskip

{\bf Acknowledgments.} T.C.~acknowledges NSF-CCF 1750430, the hospitality of the Fondation Sciences Mathématiques de Paris (FSMP) and the LPSM at the Université Paris Cité. M.F.~was supported by the Agence Nationale de la Recherche (ANR) Grant ANR-23-CE40-0003 (Project CONVIVIALITY). D.M.~was partially supported by a Simons Investigator Award 622132.

\section{The Stochastic Proof}

We'll work on   the  Wiener space $(\Omega, \mathcal{B}, \mathbb{P})$, where $\Omega$ is the set of continuous paths $\omega : [0,1]\to \mathbb{R}^n$ starting at 0, $\mathcal{B}$ is the usual Borel $\sigma$-algebra, and $\mathbb{P}$ is the Wiener measure.  Let $B_t(\omega) := \omega(t)$ be the coordinate process, so that $B = (B_t)_{0\leq t \leq 1}$ is a standard Brownian motion, and so is the time-reversed process $\hat{B}_t := B_1 - B_{1-t}$.    Let $\mathcal{F} = (\mathcal{F}_t)_{0 \leq t \leq 1}$ and $\mathcal{F}^+ = (\mathcal{F}^+_{t})_{0 \leq t \leq 1}$ denote the filtrations generated by $B$ and $\hat{B}$, respectively.  For each $t\in [0,1]$,  $\mathcal{F}_t$ and $\mathcal{F}^+_{1-t}$ are complementary, in the sense that they are independent and $\mathcal{B} =\sigma( \mathcal{F}_t \cup \mathcal{F}_{1-t}^+)$.  Henceforth, $\|\cdot\|$ denotes the $\ell^2$ norm, and $\id$ denotes the identity matrix.

Apart from standard facts in stochastic calculus, we'll need two lemmas.  The first is a  variational representation of entropy, obtained as a consequence of Girsanov's theorem; it has previously been  applied to study rigidity and stability of various functional inequalities (see, e.g.,  \cite{ACZ, EM, M}).
\begin{lemma}[\cite{EM}]\label{lem:varRep}
For  a centered probability measure $\mu$  on $\mathbb{R}^n$ with finite second moments, we have 
\begin{align}
D(\mu \| \gamma) = \inf_{F} \frac{1}{2}\int_{0}^1 \frac{\EE[\|F_t - \id\|^2 ]}{1-t}dt, \label{repEntropy}%
\end{align}
where the infimum is over  all $\mathcal{F}$-adapted matrix-valued processes $F = (F_t)_{0 \leq t \leq 1}$ such that $\int_0^1 F_t dB_t \sim \mu$.  
\end{lemma}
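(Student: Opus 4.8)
The plan is to read the right-hand side as the value of a stochastic control problem in which the control is the diffusion \emph{coefficient} $F$ of the martingale $X_t=\int_0^t F_s\,dB_s$, the running cost is $\tfrac12\|F_t-\id\|^2/(1-t)$, and the terminal law is constrained to be $\mu$; the goal is then to identify this value with $D(\mu\|\gamma)$ by producing matching primal (an optimal $F^\ast$) and dual (a potential $V$ on $[0,1]\times\R^n$) objects. It is convenient to first assume $g:=d\mu/d\gamma$ is smooth, positive and sub-Gaussian, recovering the general statement at the end by approximation and lower semicontinuity of relative entropy. A natural first move is Itô's formula applied to $\varphi(t,X_t)$, where $\varphi(t,x)=\log P_{1-t}g(x)$ is the Hopf--Cole transform of the heat flow ($P_s$ the heat semigroup, so $\varphi(0,0)=\log\int g\,d\gamma=0$ and $\varphi(1,\cdot)=\log g$); using $\partial_t\varphi+\tfrac12\Delta\varphi+\tfrac12|\nabla\varphi|^2=0$ it yields, for any admissible $F$,
\begin{align}
D(\mu\|\gamma)=\EE[\log g(X_1)]=\EE\int_0^1\Big(\tfrac12\langle F_tF_t^\top-\id,\nabla^2\varphi\rangle-\tfrac12|\nabla\varphi|^2\Big)dt. \notag
\end{align}
This exact identity expresses $D(\mu\|\gamma)$ along the martingale and is the natural first attempt, but its integrand is sign-indefinite in $F$ (the Hessian $\nabla^2\varphi$ admits no useful upper bound), so it does not by itself dominate the weighted cost; for the lower bound one passes to the value function instead.

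For the lower bound I would use a verification argument. Let $V$ solve the fully nonlinear HJB
\begin{align}
\partial_t V+\tfrac12\Tr\!\big[\nabla^2V\,(\id+(1-t)\nabla^2V)^{-1}\big]=0,\qquad V(1,\cdot)=-\log g, \notag
\end{align}
subject to the ellipticity constraint $\id+(1-t)\nabla^2V\succ0$. Applying Itô to $V(t,X_t)$ for an arbitrary adapted $F$ and using that the bracketed Hamiltonian is the pointwise infimum over matrices $F$ gives $\partial_tV+\tfrac12\langle F_tF_t^\top,\nabla^2V\rangle\ge-\tfrac12\|F_t-\id\|^2/(1-t)$, whence $\tfrac12\int_0^1\EE\|F_t-\id\|^2/(1-t)\,dt\ge V(0,0)+\int\log g\,d\mu=V(0,0)+D(\mu\|\gamma)$ for every admissible $F$. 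The lower bound therefore reduces to the single scalar fact $V(0,0)=0$. Conversely, the optimal feedback $F^\ast_t=(\id+(1-t)\nabla^2V(t,X^\ast_t))^{-1}$ turns the Itô inequality into an equality, so once one checks that the resulting terminal law is $\mu$ one obtains an admissible $F^\ast$ with cost $V(0,0)+D(\mu\|\gamma)$, giving achievability. I would calibrate the entire scheme—and in particular the weight $1/(1-t)$ and the identity $V(0,0)=0$—on the exactly solvable Gaussian model $\mu=N(0,\Sigma)$, where $\nabla^2V$ is constant, $F^\ast_t=(\id+(1-t)\nabla^2V)^{-1}$, and a direct computation returns $V(0,0)=0$ and cost $=\tfrac12(\Tr\Sigma-n-\log\det\Sigma)=D(\mu\|\gamma)$.

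The main obstacle is the HJB analysis: constructing a sufficiently regular solution $V$ satisfying the ellipticity constraint $\nabla^2V\succeq-(1-t)^{-1}\id$, proving the normalization $V(0,0)=0$, and verifying that the optimal feedback indeed produces the terminal law $\mu$ (together with the integrability and approximation needed to pass back to general adapted $F$ and general $\mu$). An appealing alternative, more in the spirit of this paper, is to bypass the HJB by \emph{time reversal}: rewrite $X_1=\int_0^1 F_t\,dB_t$ as a drift representation $X_1\stackrel{d}{=}\hat B_1+\int_0^1 w_s\,ds$ against the time-reversed Brownian motion $\hat B$—the quadratic-covariation correction in the forward-to-backward integral conversion supplying the reverse drift $w$, and the factor $1/(1-t)$ entering as the reverse-time variance normalization—after which the elementary half of the classical Boué--Dupuis/Föllmer drift representation (Girsanov plus data processing) gives $D(\mu\|\gamma)\le\tfrac12\EE\int_0^1\|w_s\|^2\,ds$, matched to the weighted cost, with a corresponding reverse-time optimizer for achievability. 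Either way, the delicate point is the behavior as $t\uparrow1$, where the cost forces $F_t\to\id$: controlling this singularity is what makes both the HJB regularity and the time-reversed stochastic calculus nontrivial.
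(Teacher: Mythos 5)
The paper does not actually prove this lemma --- it quotes it from \cite{EM}, and the accompanying footnote records the intended derivation: combine Lehec's drift representation of entropy \cite[Theorem 4]{L} with the martingale representation theorem. Measured against that, your proposal has a genuine gap in both of its routes. The second route is the one closest in spirit to the intended derivation, but the mechanism you propose for converting the martingale representation into a drift representation --- a forward-to-backward integral conversion with a quadratic-covariation correction against the time-reversed Brownian motion --- is not the one that works: the correction term would be a drift of the form $\tfrac{d}{dt}\langle F,B\rangle_t$, which bears no relation to the weighted cost $\|F_t-\id\|^2/(1-t)$, and no time reversal is needed here at all (in this paper time reversal is reserved for the coupling in Lemma \ref{lem:integralInequality}). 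The correct conversion is a stochastic Fubini identity \emph{within the forward filtration}: given an admissible $F$ with finite cost, set $v_t:=\int_0^t\frac{F_s-\id}{1-s}\,dB_s$; then $\int_0^1 v_t\,dt=\int_0^1(1-s)\frac{F_s-\id}{1-s}\,dB_s=\int_0^1(F_s-\id)\,dB_s$, so $B_1+\int_0^1v_t\,dt=\int_0^1F_t\,dB_t\sim\mu$, while It\^o's isometry and Tonelli give $\EE\int_0^1\|v_t\|^2\,dt=\int_0^1\frac{\EE[\|F_s-\id\|^2]}{1-s}\,ds$. Lehec's inequality $D(\mu\|\gamma)\le\frac12\EE\int_0^1\|v_t\|^2\,dt$ then gives ``$D\le\inf$''. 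For the reverse inequality one needs the nontrivial structural fact that the optimal (F\"ollmer) drift is an $L^2$ martingale started at $v_0=\EE[X]=0$ (this is where centering enters), hence is of the above form for some admissible $F$ that attains $D(\mu\|\gamma)$. Your sketch never exhibits the process $w$, never verifies that its energy equals the weighted cost, and never addresses why the optimizer is a martingale issuing from the origin, which is precisely what makes the correspondence exact.

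The HJB route is internally consistent as far as it goes --- the Hamiltonian minimization giving $F^\ast=(\id+(1-t)\nabla^2V)^{-1}$ and the resulting equation $\partial_tV+\frac12\Tr[\nabla^2V(\id+(1-t)\nabla^2V)^{-1}]=0$ are correct, and the Gaussian calibration is a sound sanity check --- but the two items you defer (well-posedness of the fully nonlinear equation under the constraint $\id+(1-t)\nabla^2V\succ0$, and the normalization $V(0,0)=0$) are jointly equivalent to the lemma itself; outside the Gaussian case you offer no argument for either, and the most natural way to establish $V(0,0)=0$ is again the reduction to Lehec's drift problem sketched above. So as written, neither route closes, and the missing ingredient in both cases is the explicit martingale-drift correspondence $F\mapsto v$ together with the martingale property of the F\"ollmer drift.
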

\begin{remark}
By symmetry, the same representation holds if we consider $\mathcal{F}^+$-adapted   $F$ with $\int_0^1 F_t d\hat{B}_t \sim \mu$.
\end{remark}
Stochastic proofs of several other  functional inequalities use representation formulas for the entropy and linear couplings of Brownian motions (cf.~\cite{EM,L}). Our proof will similarly rely on the  representation formula \eqref{repEntropy} for the entropy\footnote{Note that the representation \eqref{repEntropy}    is not the same as that used in \cite{L}. However, it is derived from \cite[Theorem 4]{L} by combination with the martingale representation theorem.}, but makes use of a  new coupling induced by time-reversal.    The next lemma is the crucial new ingredient; it relates   martingale representations in terms of $(B_t)_{0\leq t \leq 1}$ and its time-reversal $(\hat{B}_t)_{0\leq t \leq 1}$. 
\begin{lemma}\label{lem:integralInequality}
If $X \in L^2(\Omega, \mathcal{B}, \mathbb{P})$ is a zero-mean $\mathbb{R}^n$-valued random vector with    martingale representations
$$
X = \int_0^1 F_t dB_t = \int_{0}^1 G_t d\hat{B}_t,
$$
then 
\begin{align}
   \int_{t}^1 \EE[\|F_s-\id\|^2]ds \geq  \int_0^{1-t} \EE[\|G_{s}-\id \|^2]ds , ~~~\forall 0 \leq t \leq 1.\label{eq:tailBounds}
\end{align}  
~
\end{lemma}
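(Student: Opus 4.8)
The plan is to convert the pathwise integral inequality \eqref{eq:tailBounds} into a statement about orthogonal projections in $L^2(\Omega)$, where the complementarity of $\mathcal{F}_t$ and $\mathcal{F}^+_{1-t}$ can be brought to bear. First I would record the consequences of the martingale representation theorem together with the Itô isometry. Since $\int_0^t F_s\,dB_s$ is the martingale $\EE[X\mid\mathcal{F}_t]$, its tail satisfies $\int_t^1 F_s\,dB_s = X-\EE[X\mid\mathcal{F}_t]$, while $\int_0^{1-t}G_s\,d\hat{B}_s = \EE[X\mid\mathcal{F}^+_{1-t}]$. The Itô isometry (with $\|\cdot\|$ the Hilbert--Schmidt norm on matrices) then gives
\[
\int_t^1 \EE[\|F_s\|^2]\,ds = \EE[\|X\|^2] - \EE\big[\|\EE[X\mid\mathcal{F}_t]\|^2\big], \qquad \int_0^{1-t}\EE[\|G_s\|^2]\,ds = \EE\big[\|\EE[X\mid\mathcal{F}^+_{1-t}]\|^2\big].
\]

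Next I would expand $\|F_s-\id\|^2 = \|F_s\|^2 - 2\Tr(F_s) + n$, and likewise for $G$, and verify that the linear and constant contributions to the two sides of \eqref{eq:tailBounds} cancel. The constant terms are both equal to $(1-t)n$. For the trace terms, a cross-variation computation via the Itô isometry yields
\[
\int_t^1 \EE[\Tr F_s]\,ds = \EE[\langle X, B_1 - B_t\rangle], \qquad \int_0^{1-t}\EE[\Tr G_s]\,ds = \EE[\langle X, \hat{B}_{1-t}\rangle],
\]
and since $\hat{B}_{1-t} = B_1 - B_t$ by the definition of the time reversal, these coincide. Hence \eqref{eq:tailBounds} reduces to the single inequality
\[
\EE\big[\|\EE[X\mid\mathcal{F}_t]\|^2\big] + \EE\big[\|\EE[X\mid\mathcal{F}^+_{1-t}]\|^2\big] \leq \EE[\|X\|^2].
\]

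The heart of the argument, and the step I expect to be the main obstacle, is this final inequality, which is exactly where the independence of the filtrations enters. I would argue in $L^2(\Omega;\R^n)$ as follows. Let $P$ and $Q$ denote the orthogonal projections $U\mapsto\EE[U\mid\mathcal{F}_t]$ and $U\mapsto\EE[U\mid\mathcal{F}^+_{1-t}]$. Because $\mathcal{F}_t$ and $\mathcal{F}^+_{1-t}$ are independent, every centered $\mathcal{F}_t$-measurable vector is orthogonal to every centered $\mathcal{F}^+_{1-t}$-measurable vector. As $X$ is centered, both $PX$ and $QX$ are centered and hence lie in these mutually orthogonal subspaces; in particular $\EE[\langle PX, QX\rangle]=0$. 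Therefore, using that $P,Q$ are self-adjoint idempotents,
\[
\EE[\|PX\|^2]+\EE[\|QX\|^2] = \EE[\langle X,(P+Q)X\rangle] = \EE[\|(P+Q)X\|^2] \leq \EE[\|X\|^2],
\]
where the last step follows from Cauchy--Schwarz since $(P+Q)X$ is the orthogonal projection of $X$ onto the direct sum of the two subspaces. Combining the three steps establishes \eqref{eq:tailBounds}.
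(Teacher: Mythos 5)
Your proposal is correct, and it follows the same overall skeleton as the paper's proof --- expand $\|F_s-\id\|^2 = \|F_s\|^2 - 2\Tr(F_s) + n$, handle the quadratic, linear, and constant contributions separately, and use the independence of $\mathcal{F}_t$ and $\mathcal{F}^+_{1-t}$ for the quadratic part --- but it diverges in a genuinely interesting way on the linear term. For the quadratic part, your projection identity $\EE[\|PX\|^2]+\EE[\|QX\|^2] = \EE[\langle X,(P+Q)X\rangle] \leq \EE[\|X\|^2]$ is essentially a repackaging of the paper's chain $\EE[\|X-\EE[X|\mathcal{F}_t]\|^2] \geq \EE[\|\EE[X-\EE[X|\mathcal{F}_t]\,|\,\mathcal{F}^+_{1-t}]\|^2] = \EE[\|\EE[X|\mathcal{F}^+_{1-t}]\|^2]$ (Pythagoras plus conditional Jensen plus the fact that independence kills the cross term); both arguments are the same orthogonality statement in $L^2$. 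Where you genuinely depart from the paper is the trace term: the paper proves $\EE[F_t]=\EE[G_{1-t}]$ via the Clark--Ocone formula and the identity $D_tX = \hat{D}_{1-t}X$ for Malliavin derivatives, which forces an extra approximation step (density of $\mathbb{D}^{1,2}$ in $L^2$ and $L^2$-convergence of the martingale-representation integrands) to cover general $X$. Your polarized It\^o-isometry computation $\int_t^1\EE[\Tr F_s]\,ds = \EE[\langle X, B_1-B_t\rangle] = \EE[\langle X,\hat{B}_{1-t}\rangle] = \int_0^{1-t}\EE[\Tr G_s]\,ds$ reaches the same conclusion using only the elementary identity $\hat{B}_{1-t}=B_1-B_t$, requires no Malliavin differentiability, and therefore needs no limiting argument. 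This is a real simplification: it trades the Clark--Ocone machinery for a one-line duality against the increment $B_1-B_t$, at no loss of generality.
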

\begin{proof}
By the Pythagorean theorem, convexity, and  independence of $\mathcal{F}_t$ and $\mathcal{F}^+_{1-t}$, we have 
\begin{align}
\EE[ \|X\|^2] - \EE[\|\EE[X|\mathcal{F}_t]\|^2] = \EE[\|X - \EE[X|\mathcal{F}_{t}]\|^2]   \geq    \EE[\|\EE[X- \EE[X|\mathcal{F}_{t}]|\mathcal{F}^+_{1-t}]\|^2]= \EE[\|\EE[X|\mathcal{F}^+_{1-t}]\|^2].  \label{eq:Pythagorean}
\end{align}
Since $\EE[X|\mathcal{F}_t] = \int_0^t F_s dB_s$ and $\EE[X|\mathcal{F}^+_{1-t}] = \int_0^{1-t} G_s d\hat{B}_s$, three applications of It\^o's isometry give 
$$
     \int_{t}^1 \EE[\|F_s\|^2]ds = \EE[ \|X\|^2] - \EE[\|\EE[X|\mathcal{F}_t]\|^2] \geq  \EE[\|\EE[X|\mathcal{F}^+_{1-t}]\|^2] = \int_0^{1-t} \EE[\|G_s\|^2]ds  , ~~~\forall 0 \leq t \leq 1. 
 $$
For the moment, assume $X$ is differentiable in the Malliavin sense.  If $D_t$ denotes the usual Malliavin derivative, and  $\hat{D}_t$ denotes the Malliavin derivative with respect to the time-reversed path space, then $D_t X = \hat{D}_{1-t} X$. %
Thus,  a consequence of the Clark--Ocone theorem and iterated expectation is the identity
 $$
 \EE[F_t] = \EE[ \EE[D_t X|\mathcal{F}_t] ] = \EE[ D_t X ]  = \EE[\hat{D}_{1-t}X]=  \EE[ \EE[\hat{D}_{1-t} X|\mathcal{F}^+_{1-t}] ]  = \EE[ G_{1-t} ], ~~~\forall 0 \leq t \leq 1.
 $$
Combining the previous two  observations   gives  \eqref{eq:tailBounds}.  
 If $X$ is not Malliavin-differentiable,  then by density of $\mathbb{D}^{1,2}$ in $L^2$ we may approximate it in $L^2$ by such a sequence, and use It\^o's isometry to conclude $L^2$-convergence of the coefficients in the martingale representations. Thus,  the desired conclusion holds generally.   
\end{proof}

\begin{proof}[Proof of Theorem \ref{thm:symmetrizedTalagrand}] The inequality is invariant with respect to translations of $\nu$, so we may also assume $\nu$ is  centered.  
 Let $F = (F_t)_{0 \leq t \leq 1}$ be any $\mathcal{F}$-adapted process  such that $\int_0^1 F_t dB_t \sim \mu$, and let $H = (H_t)_{0 \leq t \leq 1}$ be any $\mathcal{F}^+$-adapted process such that $\int_0^1 H_t d\hat{B}_t \sim \nu$.   Let $G = (G_t)_{0 \leq t \leq 1}$ be the martingale representation of  $\int_0^1 F_t dB_t$ in terms of the time-reversed Brownian motion $\hat{B}$; i.e., $G$ is $\mathcal{F}^+$-adapted, satisfying $\int_0^1 G_t d\hat{B}_t =\int_0^1 F_t dB_t \sim \mu$.   By the Tonelli theorem and Lemma \ref{lem:integralInequality}, we have the estimate
      \begin{align*}
      \int_0^1 \frac{\EE[\|G_{s}-\id\|^2]}{s} ds&= \int_0^1  \EE[\|G_{s}-\id \|^2] ds +  \int_0^1 \frac{1}{(1-t)^2} \left( \int_0^{1-t} \EE[\|G_{s}-\id \|^2] ds \right) dt\\
      &\leq \int_0^1  \EE[\|F_s-\id \|^2] ds +  \int_0^1 \frac{1}{(1-t)^2} \left( \int_t^1 \EE[\|F_s-\id \|^2] ds \right) dt = \int_0^1 \frac{\EE[\|F_s-\id \|^2]}{1-s}ds.
   \end{align*}
Taking this together with the definition of $W_2$, It\^o's isometry,   and convexity of $\|\cdot\|^2$, gives
\begin{align*}
W_2(\mu,\nu)^2 \leq \EE \left\| \int_{0}^1 (G_t -H_t)d\hat{B}_t  \right\|^2 
=  \int_{0}^1 \EE \|G_t -H_{t}\|^2 dt
 &\leq \int_{0}^1 \frac{\EE \|G_t - \id\|^2}{t} dt + \int_{0}^1 \frac{\EE \|H_{t} - \id\|^2}{1-t}dt
 \\ &\leq   \int_0^1 \frac{\EE[\|F_t-\id \|^2]}{1-t}dt+  \int_{0}^1 \frac{\EE \|H_t - \id\|^2}{1-t} dt.
\end{align*}
With the help of  Lemma  \ref{lem:varRep}, optimizing over $F$ and $H$  completes the proof . \end{proof}

\section{Remarks on the  Approach}
\noindent{\bf Equality cases:} The  equality cases for \eqref{eq:symmTal} are also evident from the given proof.  Indeed, if $D(\mu\|\gamma) <\infty$, then the infimum in \eqref{repEntropy} is a.s.-uniquely achieved by an $\mathcal{F}$-adapted process $F = (F_t)_{0 \leq t \leq 1}$.  Defining $X := \int_0^1 F_t dB_t\sim \mu$ for this  particular $F$, equality in \eqref{eq:symmTal} implies equality in \eqref{eq:Pythagorean} for a.e.~$t\in[0,1]$, which means that $X - \EE[X|\mathcal{F}_t]$ is $\mathcal{F}^+_{1-t}$-measurable for a.e.~$t\in [0,1]$.  By Proposition \ref{prop:Gaussian} below,  this ensures $X\sim \mu$ is Gaussian.  By symmetry, any extremal $\nu$ is also Gaussian, and  explicit computation shows that  $\mu,\nu$ are extremizers in \eqref{eq:symmTal} iff   $\mu = N(0,C)$ and $\nu = N(\theta,C^{-1})$ for  some $\theta\in \mathbb{R}^n$ and positive  definite $C\in \mathbb{R}^{n\times n}$.  
\begin{proposition}\label{prop:Gaussian}
Let $X\in L^2(\Omega, \mathcal{B},\mathbb{P})$ admit martingale representation $X = \int_0^1 F_t dB_t$.  If $X - \EE[X|\mathcal{F}_t]$ is $\mathcal{F}^+_{1-t}$-measurable for a.e.~$t\in [0,1]$, then $X$ is Gaussian. 
\end{proposition}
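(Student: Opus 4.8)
The plan is to pass to the Wiener chaos decomposition of $X$ and show that the hypothesis forces every chaos of order $\ge 2$ to vanish, leaving $X$ in the sum of the zeroth and first chaos, which is Gaussian. Working coordinate by coordinate (a vector is $\mathcal{F}^+_{1-t}$-measurable iff each of its coordinates is), I write the scalar chaos expansion $X = \sum_{n \ge 0} I_n(f_n)$ with symmetric kernels $f_n \in L^2([0,1]^n)$.

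First I would record two identifications. The conditional expectation onto the $\sigma$-algebra generated by a first-chaos subspace acts on chaos by tensor projection of the kernels; concretely $\EE[I_n(f_n)\mid \mathcal{F}_t] = I_n(f_n \mathbf{1}_{[0,t]^n})$, so $R_t := X - \EE[X\mid\mathcal{F}_t]$ has kernels $f_n \mathbf{1}_{[0,1]^n \setminus [0,t]^n}$, the indicator of the set where at least one coordinate exceeds $t$. Second, I would identify $\mathcal{F}^+_{1-t}$ with the $\sigma$-algebra $\mathcal{G}_t := \sigma(B_s - B_r : t \le r \le s \le 1)$ of increments of $B$ after time $t$: since $\hat B_r = B_1 - B_{1-r}$, we have $\mathcal{F}^+_{1-t} = \sigma(B_1 - B_u : t \le u \le 1) = \mathcal{G}_t$. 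As $\mathcal{G}_t$ is generated by the first-chaos subspace $L^2([t,1])$, the same projection formula shows that a random variable is $\mathcal{G}_t$-measurable iff all of its chaos kernels are supported, up to null sets, on $[t,1]^n$.

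Combining the two, the assumption that $R_t$ is $\mathcal{F}^+_{1-t}$-measurable says precisely that $f_n \mathbf{1}_{[0,1]^n \setminus [0,t]^n}$ is supported on $[t,1]^n$ for every $n$; equivalently, $f_n$ vanishes a.e.\ on the set $A_t$ of points having at least one coordinate $>t$ and at least one coordinate $<t$. Now I would let $t$ vary: for any off-diagonal point $x$ (one with $\min_i x_i < \max_i x_i$) and any $t \in (\min_i x_i, \max_i x_i)$ one has $x \in A_t$, so a short Fubini argument over the a.e.-$t$ for which the hypothesis holds forces $f_n = 0$ a.e.\ on the off-diagonal. For $n \ge 2$ the diagonal is Lebesgue-null, hence $f_n \equiv 0$; for $n=1$ the constraint (support on $[t,1]$) is automatic, and $n=0$ is a constant which vanishes since a martingale representation has zero mean. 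Thus each coordinate of $X$ lies in the first chaos, i.e.\ $X = \int_0^1 F_s\, dB_s$ with a deterministic matrix-valued $F$; being a Wiener integral of a deterministic integrand, $X$ is (jointly) Gaussian.

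The step I expect to require the most care is the measurability-to-support characterization: justifying that $\EE[\cdot\mid\mathcal{F}_t]$ and $\EE[\cdot\mid\mathcal{G}_t]$ act on Wiener chaos by orthogonal projection of the kernels, together with the clean identification $\mathcal{F}^+_{1-t}=\mathcal{G}_t$. Once these are in place, the vanishing of the higher chaoses follows immediately from the off-diagonal geometry, with no delicate estimates remaining. An essentially equivalent route uses the Clark--Ocone/Malliavin formalism already invoked above, where $\mathcal{F}^+_{1-t}$-measurability of $R_t$ reads $D_u R_t = 0$ for a.e.\ $u<t$, and $D_u R_t = \int_t^1 D_u F_s\, dB_s$ for $u<t$ gives $D_u F_s = 0$ for $u<s$ by It\^o's isometry, whence $F_s$ is deterministic; but this presupposes Malliavin differentiability, which the chaos argument sidesteps.
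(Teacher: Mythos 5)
Your argument is correct, but it takes a genuinely different route from the paper's. The paper's proof is softer and shorter: writing $M_t = \EE[X|\mathcal{F}_t]$, the hypothesis makes $M_1-M_t$ measurable with respect to $\mathcal{F}^+_{1-t}$, while $M_t-M_s$ is $\mathcal{F}_t$-measurable; since $\mathcal{F}_t$ and $\mathcal{F}^+_{1-t}$ are independent, the increments of $M$ over a dense (full-measure) set of times are independent, and a continuous, square-integrable process with independent increments is a Gaussian process. That route uses only the complementarity of the two filtrations and a classical fact about continuous processes with independent increments; no chaos or Malliavin machinery appears. Your route trades this softness for explicitness: granting the standard projection identities $\EE[I_m(f_m)\mid\mathcal{F}_t]=I_m(f_m\mathbf{1}_{[0,t]^m})$ and the characterization of $\mathcal{G}_t$-measurability by kernel support in $[t,1]^m$ (both standard, but worth a citation, e.g.\ to Nualart's book), the off-diagonal vanishing and the Fubini step over $t$ are sound, because any point with two distinct coordinates lies in your set $A_t$ for a positive-measure set of $t$, and the diagonal is null for order $m\geq 2$. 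What you gain is the conclusion in its strongest form --- $X$ is a Wiener integral of a deterministic kernel, i.e.\ $F$ may be taken deterministic --- rather than Gaussianity deduced abstractly from independent increments. Two small points to tidy in a final write-up: since $B$ is $n$-dimensional, the chaos kernels carry component indices as well as time variables (your coordinatewise reduction handles the target space of $X$ but not the source, though nothing in the argument changes), and the order-zero term vanishes because the stochastic integral is centered, as you note. Neither affects correctness.
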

\begin{proof}
Define $M_t := \int_{0}^t F_s dB_s$.  The hypothesis is equivalent to requiring that $(M_1 - M_t)$ is $\mathcal{F}^+_{1-t}$-measurable for each $t\in \mathcal{D}$, where $\mathcal{D}$ is dense in $[0,1]$.  Fix  any $s,t\in \mathcal{D}$, with $s\leq t$.  Since  $(M_1 - M_t)$ is $\mathcal{F}^+_{1-t}$-measurable by hypothesis, and $(M_t - M_s)$ is $\mathcal{F}_{t}$-measurable by definition,  complementarity ensures $(M_1 - M_t)$ and $(M_t - M_s)$ are independent.  Iterating this procedure on the $(M_1 - M_t)$ term allows us to conclude that $(M_t)_{0 \leq t \leq 1}$ has independent increments, provided the endpoints of the increments are in $\mathcal{D}$.  Since $X\in L^2(\Omega, \mathcal{B},\mathbb{P})$, a version of $(M_t)_{0 \leq t \leq 1}$ admits continuous  sample paths, and we conclude by density of $\mathcal{D}$ that $(M_t)_{0 \leq t \leq 1}$ has (square-integrable) independent increments generally, and is thus a Gaussian process.
\end{proof}

\noindent{\bf Importance of the coupling induced by time-reversal:}~With the proof of Theorem \ref{thm:symmetrizedTalagrand} in hand and the equality cases characterized, we  highlight the importance of the coupling based on time-reversal. As is  the case in previous stochastic proofs of functional inequalities, one could  appeal to  martingale representations   $\int_0^1 F_t dB^1_t \sim \mu$ and $\int_0^1 G_t dB^2_t \sim \nu$ with linearly coupled Brownian motions $B^1$ and $B^2$ (or, equivalently, Brownian motions $B^1$ and $B^2$ adapted to a common filtration) to couple $\mu$ and $\nu$.   This approach cannot work, as we now explain. 	

	Working in dimension $n=1$ for simplicity, recall that when $\mu = N(0,\alpha)$ with $\alpha>0$, the minimizer $F$ in \eqref{repEntropy} has an  explicit expression  (e.g., \cite[Section 2]{EM}).  In particular, we have the implication
			\begin{align*}
	\int F_t dB^1_t \sim \mu \mbox{~and~} D(\mu \| \gamma) =   \frac{1}{2}\int_{0}^1 \frac{\EE[\|F_t - \id\|^2 ]}{1-t}dt ~~\Rightarrow~~  	F_t = \frac{\alpha}{1-t + \alpha t}.
	\end{align*}
Likewise, for $\nu = N(0,\alpha^{-1})$,  the ``optimal"   representation of $\nu$ with respect to $B^2$ satisfies   
	$$
 \int_0^1 G_t dB^2_t \sim \nu \mbox{~and~} D(\nu \| \gamma) =   \frac{1}{2}\int_{0}^1 \frac{\EE[\|G_t - \id\|^2 ]}{1-t}dt ~~\Rightarrow~~ G_t = \frac{1}{ \alpha(1-t) +  t}.
$$ 
Since $B^1$ and $B^2$ are linearly coupled standard Brownian motions, we can write 
\begin{align}
\begin{bmatrix}
B^1_t \\ B^2_t
\end{bmatrix} = \begin{bmatrix}
1 & \sigma \\ \sigma & 1
\end{bmatrix}^{1/2} B_t, ~~~0 \leq t \leq 1,\label{linCoupling}
\end{align}
for some $|\sigma| \leq 1$, where $(B_t)_{0 \leq t \leq 1}$ is a 2-dimensional standard Brownian motion.  Thus, our construction induces a  coupling $\pi_{\sigma}$  of $X := \int_0^1 F_t dB^1_t\sim \mu$ and $Y := \int_0^1 G_t dB^1_t\sim \nu$, depending on $\sigma$, which satisfies 
\begin{align*}
\EE_{\pi_{\sigma}} \|X-Y\|^2 = \EE \left\| \int_0^1 F_t dB^1_t- \int_0^1 G_t dB^2_t  \right\|^2 &= \int_0^1 \left(  F_t^2 + G_t^2 - 2 \sigma F_t G_t \right) dt \\
&= \begin{cases}
 \left(\alpha + \frac{1}{\alpha}\right)  -   \frac{4 \sigma \log \alpha}{ \left(\alpha - \frac{1}{\alpha}\right) } & \mbox{if~}\alpha \neq 1\\
 2(1-\sigma) & \mbox{if~}\alpha = 1,
 \end{cases}
\end{align*}
where we made use of It\^o's isometry and \eqref{linCoupling}.  A simple calculation  reveals that 
$$
\min_{\sigma : |\sigma|\leq 1} \EE_{\pi_{\sigma}} \|X-Y\|^2 \geq \alpha + \frac{1}{\alpha}  - 2 = W_2(\mu,\nu)^2 = 2 D(\mu\|\gamma) + 2 D(\nu\|\gamma), 
$$
with equality if and only if $\alpha = 1$.  So, with the exception of  the trivial case $\mu = \nu = \gamma$, the established stochastic approach to proving functional inequalities using linearly coupled Brownian motions fails to produce the requisite optimal coupling between $\mu$ and $\nu$ in all extremal cases (at least, in this implementation).  This suggests that coupling through time-reversal lends a useful new degree of freedom to the stochastic program for proving functional inequalities.

{\small

}

\end{document}